\newtheorem{theorem}{Theorem}[section]
\newtheorem{corollary}[theorem]{Corollary}
\newtheorem{lemma}[theorem]{Lemma}
\theoremstyle{remark}
\newtheorem{remark}[theorem]{Remark}
\numberwithin{equation}{section}
\numberwithin{equation}{section}
\def\R{\mathbb R}
\def\N{\mathbb N}
\def\E{\mathbb E}
\def\e{\epsilon}
\def\1{\mathbbm{1}}
\newcommand{\mx}[1]{\mathbf{#1}}
\long\def\symbolfootnote[#1]#2{\begingroup%
\def\thefootnote{\fnsymbol{footnote}}\footnote[#1]{#2}\endgroup}
\begin{document}

\title{Random matrices with log-range correlations, and log-Sobolev inequalities}
\author{Todd Kemp}
\thanks{Kemp supported in part by NSF CAREER Award DMS-1254807}
\address{Department of Mathematics\\
University of California, San Diego \\
La Jolla, CA 92093-0112}
\email{tkemp@math.ucsd.edu}
\author{David Zimmermann}
\address{Department of Mathematics\\
  University of Chicago \\
  5734 S.\ University Avenue \\
  Chicago, IL 60637}
\email{dz@math.uchicago.edu}
\maketitle

\begin{abstract}
Let $X_N$ be a symmetric $N\times N$ random matrix whose $\sqrt{N}$-scaled centered entries are uniformly square integrable.  We prove that if the entries of $X_N$ can be partitioned into independent subsets each of size $o(\log N)$, then the empirical eigenvalue distribution of $X_N$ converges weakly to its mean in probability.  This significantly extends the best previously known results on convergence of eigenvalues for matrices with correlated entries (where the partition subsets are blocks and of size $O(1)$.)  we prove this result be developing a new log-Sobolev inequality, generalizing the first author's introduction of mollified log-Sobolev inequalities: we show that if $\mx{Y}$ is a bounded random vector and $\mx{Z}$ is a standard normal random vector independent from $\mx{Y}$, then the law of $\mx{Y}+t\mx{Z}$ satisfies a log-Sobolev inequality for all $t>0$, and we give bounds on the optimal log-Sobolev constant.

\end{abstract}

\section{Introduction}

Random matrix theory is primarily interested the convergence of statistics associated to the eigenvalues (or singular values) of $N\times N$ matrices whose entries are random variables with a prescribed joint distribution.    The field was begun by Wigner in \cite{Wi55,Wi58}.  The bulk of the modern field, devoted to the (mostly settled) {\em universality program}, is concerned with two families of symmetric random matrix ensembles:
\begin{itemize}
\item {\em Wigner ensembles} $X_N$: the entries of $\sqrt{N}X_N$ are i.i.d.\ random variables (modulo the symmetry constraint) with sufficiently many finite moments; and
\item {\em Invariant ensembles} $X_N$ the joint law of entries has a density with respect to the Lebesgue measure on symmetric matrices, of the form $f(X) = c_N \exp(-N\mathrm{Tr}(V(X)))$ for some sufficiently nice potential function $V\colon\R\to\R$.
\end{itemize}
(There are also corresponding complex Hermitian ensembles, and a wilder world of non-Hermitian, generally non-normal, matrix ensembles; we will restrict the present discussion to the real symmetric cases.)  Both of these are natural generalizations of the {\em Gaussian Orthogonal Ensemble} $\mathrm{GOE}_N$, which is (up to a different scaling on the diagonal) a Wigner ensemble with Gaussian entries, and also an invariant ensemble with potential function $V(X) = X^2$.

Given a symmetric matrix $X_N$, enumerate its eigenvalues $\lambda_1^N\le\cdots\le\lambda_N^N$ in nondecreasing order.  The {\em empirical spectral distribution (ESD)} of $X_N$ is the random point measure
\begin{equation} \label{e.ESD} \mu_N = \frac1N\sum_{j=1}^N \delta_{\lambda^N_j}. \end{equation}
Integrating $\mu_N$ against a step function produces a histogram of the eigenvalues of $X_N$; in general, the random variables $\int f\,d\mu_N$ for test functions $f\colon\R\to\R$ are called {\em linear statistics} of the eigenvalues.  Wigner's original paper \cite{Wi55,Wi58} showed that, for the $\mathrm{GOE}_N$, the ESD $\mu_n$ converges weakly in expectation to what is now called Wigner's semicircle law: $\sigma(dx) = \frac{1}{2\pi}\sqrt{(4-x^2)_+}\,dx$.  To be precise: this means that $\E(\int f\,d\mu_N) \to \int f\,d\sigma$ for each $f\in C_b(\R)$.  This convergence was later upgraded to weak a.s.\ convergence.  Many more results are known about the fluctuations of $\mu_N$, the spacing between eigenvalues, and the distribution and fluctuations of the largest eigenvalue.

Generally speaking, the universality program's aim is to show that all asymptotic statistics of the $\mathrm{GOE}_N$ have the same behavior for {\em all} Wigner / invariant ensembles (subject to sharp technical constraints on the distributions / potentials involved).  The reader may consult the book \cite{AGZ} and its extensive bibliography for more on this endeavor, which is now largely complete.

There is also a vast literature on {\em band matrices}.  These are random matrix ensembles generalizing Wigner matrices, where the upper-triangular entries are still independent, but need not be identically distributed (so long as they satisfy some form of uniform regularity).  There is a vast literature on band matrices; see, for example, the expansive paper \cite{AZ-PTRF} which uses combinatorial and probabilistic methods to establish that a large class of band matrices have ESD converging to the semicircle law, with Gaussian fluctuations of a similar form to Wigner matrices.

There are very few papers, however, dealing with random matrices with {\em correlated entries}.  The best previously known results are in \cite{Shlyakh-IMRN,Bryc-Speicher-IEEE}, dealing with {\em block matrices}: ensembles $X_{kN}$ possessed of $k\times k$ blocks that have a fixed covariance structure (uniform among the blocks), where the $N^2$ blocks are independent up to symmetry.  (The actual ensembles studied in \cite{Shlyakh-IMRN,Bryc-Speicher-IEEE} are presented in a different form, with an overall $k\times k$ block structure with $N\times N$ blocks all whose entries are independent; this is just a basis change from the description above.)  In that setting, the tools of {\em operator-valued free probability} come to bear giving a tractable combinatorial method to analyze the asymptotic linear statistics of the eigenvalues.  These definitely do not fit the universality mould: the limiting ESD is typically not semicircular.  The combinatorial methods used to analyze such ensembles do not easily extend beyond the case that $k$ is fixes as $N\to\infty$.

Our main theorem is a significant generalization of ESD convergence for block-type matrices, both in terms of allowing $k$ to grow (slowly) with $N$, and vastly softening the rigid structure of the partition into independent blocks.

\begin{theorem} \label{t.main} Let $X_N$ be an $N\times N$ random matrix.  Assume that the entries of $X_N$ satisfy the following conditions.
\begin{itemize}
\item[(1)] The family $\{N[X_N]_{ij}^2\}_{N\in\N,1\le i,j\le N}$ is uniformly integrable.
\item[(2)] For each $N$, there is a set partition $\Pi_N$ of $\{(i,j)\colon 1\le i\le j\le N\}$ and a constant $d_N = o(\log N)$ such that each block of $\Pi_N$ has size $\le d_N$, and the entries $[X_N]_{ij}$ and $[X_N]_{k\ell}$ are independent if $(i,j)$ and $(k,\ell)$ are not in the same block of $\Pi_N$.
\end{itemize}
Then the empirical spectral distribution $\mu_N$ of $X_N$ converges weakly in probability to its mean:
\[ \int f\,d\mu_N - \E\left(\int f\,d\mu_N\right) \to_{\mathbb{P}} 0. \]
\end{theorem}
Condition (1) is analogous to the requirement that the second moments of the entries of $\sqrt{N}X_N$ are normalized in Wigner ensembles.  Condition (2) generalizes the independent block structure mentioned above; for example, in the ensembles treated in \cite{Shlyakh-IMRN,Bryc-Speicher-IEEE} but with $k$ allowed to grow with $N$ sub-logarithmically, one gets convergence of the ESD weakly in probability.  In particular, Theorem \ref{t.main} extends the results of those papers even in the case $k=O(1)$, since only convergence in expectation was known before.

Theorem \ref{t.main} is proved below in Section \ref{section.RM}.  The method we use to prove it involves through concentration of measure mediated by a powerful coercive inequality: a {\em log-Sobolev inequality}.  Briefly: a probability measure $\mu$ on $\R^d$ satisfies a log-Sobolev inequality with constant $c$ if
\[ \mathrm{Ent}_{\mu}(f^2) \le c\int |\nabla f|^2\,d\mu \]
for all sufficiently integrable positive functions $f$ with $\int f^2\,d\mu=1$; here $\mathrm{Ent}_\mu(g) = \int g\log g\,d\mu$ for a $\mu$-probability density $g$.  It first appeared in \cite{Stam} (in a slightly different form, written in terms of $g=f^2$, where the Dirichlet form on the right-hand-side becomes the relative Fisher information of $g$), in the context of Gaussian measures.  It was later rediscovered by Gross \cite{Gr75} who named it so.  Over the past four decades, it has played an important role probability theory, functional analysis, and differential geometry; see, for example, \cite{Ba94, Ba97, BH97, BT06, Da87, Da90, DS84, DS96, GZ03, Le96, Le01, Le03, Vi03, Ya96, Ya97, Ze92}.   There is a big industry of literature devoted to necessary and sufficient conditions for a log-Sobolev inequality to hold; cf.\ \cite{BE85, BL06, BL00,GR98, HS87}.

Many of the above applications rely on uniform concentration of measure bounds that hold for measures satisfying a log-Sobolev inequality; one nice form of these concentration inequalities is called a Herbst inequality, cf.\ \cite{GR98}.  Using the Herbst inequality, Guionnet \cite{Gu09}  gave a fundamentally new proof of Wigner's semicircle law; this proof automatically generalized to non-Gaussian ensembles whose entries satisfy a log-Sobolev inequality.  Motivated in part by this, the second author of the present paper developed a new approximation scheme, the {\em mollified log-Sobolev inequality}, in \cite{Zi13}: if $Y$ is any bounded random variable and $Z$ is a standard normal random variable independent from $Y$, then $Y+tZ$ satisfies a log-Sobolev inequality for all $t>0$, with a constant $c(t)$ that is bounded in terms of an exponential of $\|Y\|_{\infty}^2/t$.    Using this, together with a standard cutoff argument, generalized Guionnet's technique to give a fully general proof of Wigner's law for all Wigner ensembles.

Independence played a key role in this analysis, due to the fact that log-Sobolev inequalities behave well under products of measures.  In the setting of current interest where we no longer have independence, we will need a multivariate version of the mollified log-Sobolev inequality, with sufficient growth bounds on the constant.  That is our second main theorem, which is of independent interest.

\begin{theorem} \label{t.LSI} Let $\mx{Y}$ be a bounded random vector in $\R^d$, and let $\mx{Y}$ be a standard centered normal random vector in $\R^d$ (i.e.\ $\mathrm{Law}_{\mx{Z}}(d\mx{x}) = (2\pi)^{-d/2} e^{-|\mx{x}|^2/2}\,d\mx{x}$) independent from $\mx{Y}$.  For $0<t\le\||\mx{Y}|\|_{\infty}^2$, the measure $\mathrm{Law}_{\mx{Y}+t\mx{Z}}$ satisfies a log-Sobolev inequality, with constant $c(t)$ satisfying
\[ c(t) \le 289\||\mx{Y}|\|_{\infty}^2 \exp\left(20d+\frac{5\||\mx{Y}|\|_{\infty}^2}{t}\right). \]
\end{theorem}

\begin{remark} \begin{itemize}
\item[(1)] In \cite{Zi13}, the second author proved a bound of this form over $\R^1$, with a slightly smaller constant but still growing with an exponential of $1/t$ as $t\downarrow 0$.  In fact, growth of this kind is sharp and cannot be improved.
\item[(2)] We do not know if the optimal constant grows with dimension as this bound does.  Regardless, a dimension independent bound of this form would not improve our result in Theorem \ref{t.main} to the a.s.\ convergence that likely holds in general.
\item[(3)] Following \cite{Zi13}, in \cite{WW13} the authors generalized mollified log-Sobolev inequalites to $\R^d$ (and with a class of measures more general than compactly-supported), using fairly standard techniques like those we use below to prove Theorem \ref{t.LSI}.  However, they give no quantitative bounds on the log-Sobolev constant, which is crucial to our present analysis.
\end{itemize}
\end{remark}

We prove Theorem \ref{t.LSI} using the Lyapunov approach (with the standard choice of the Lyapunov function), paying careful attention to the explicit dependence of the LSI constant on the Lyapunov exponents.  In particular, this approach also gives bounds on the best constant in the Poincar\'e inequality for such mollified measures, as we show below in Section \ref{section.LSI}.


\section{Symmetric Random Matrices with Log-Range Correlations\label{section.RM}}

\subsection{Guionnet's Approach to Wigner's Law}

Let us fix notation as in the introduction: let $X_N$ be a symmetric random $N\times N$ matrix ensemble with eigenvalues $\lambda^N_1\le\cdots\le \lambda^N_N$, and let $\mu_N$ denote the empirical spectral distribution (ESD) of \eqref{e.ESD}.  Wigner's law \cite{Wi55,Wi58} states that $\mu_N$ converges weakly a.s.\ to the semicircle law $\sigma$, in the case that the entries of $X_N$ is a $\mathrm{GOE}_N$.  Wigner's proof proceeded by the method of moments and is fundamentally combinatorial.  Analytic approaches (involving fixed point equations, complex PDEs, and orthogonal polynomials) developed over the decades, but the first truly probabilistic argument was provided by Guionnet in \cite[p.70, Thm. 6.6]{Gu09}.  The result can be stated thus.

\begin{theorem}\label{t.Guionnet}
\textnormal{(Guionnet).}  Let $X_N$ be a symmetric random matrix. If the joint law of entries of $\sqrt{N}X_N$ satisfies a log-Sobolev inequality with constant $c$, then for all $\epsilon>0$ and all Lipschitz $f:\mathbb{R}\rightarrow\mathbb{R}$,
\[
\mathbb{P}\left(\left|\int f \,d\mu_N-\mathbb{E}\left(\int f\,d\mu_N\right)\right|\geq \epsilon\right)
\leq 2\exp\left(\frac{-N^2 \epsilon^2}{4c||f||_{\mathrm{Lip}}^2}\right).
\]
\end{theorem}

In fact, in the Wigner ensembe setting, the i.i.d.\ condition means we really need only assume that the law of {\em each entry} satisfies a log-Sobolev inequality.  This is due to the following result often called {\em Segal's lemma}; for a proof, see \cite[p. 1074, Rk. 3.3]{Gr75}.

\begin{lemma}[Segal's Lemma] \label{l.Segal} Let $\nu_1,\nu_2$ be probability measures on $\mathbb{R}^{d_1}$ and $\mathbb{R}^{d_2}$, satisfying log-Sobolev inequalities with constants $c_1, c_2$, respectively. Then the product measure measure $\nu_1\otimes\nu_2$ on $\mathbb{R}^{d_1+d_2}$ satisfies a log-Sobolev inequality with constant $\max\{c_1, c_2\}$.
\end{lemma}

Theorem \ref{t.Guionnet} explicitly gives weak convergence in probability of $\mu_N$ to its limit mean.  Moreover, in the Wigner ensemble case where the constant $c$ is determined by the common law of the entries and so doesn't depend on $N$, the rate of convergence is fast enough that a standard Borel--Cantelli argument immediately upgrades this to a.s.\ convergence.  In \cite{Zi13}, the second author showed that, under certain integrability conditions, the empirical law of eigenvalues $\mu_N$ converges weakly in probability to its mean, {\em regardless} of whether or not the joint laws of entries satisfy a log-Sobolev inequality.  The idea is to use the mollified log-Sobolev inequality (the $d=1$ case of Theorem \ref{t.LSI} applied) to a cutoff of $X_N$ with $\mathrm{GOE}_N$ noise added in with variance $t$, and then let $t\downarrow 0$.  The explosion of the constant $c(t)$ is too fast to allow for this argument to yield a.s.\ convergence, but it still manages convergence in probability in complete generality.

For our present purposes, where we no longer assume independence or identical distribution of the entries of $X_N$,  it will not suffice to assume each entry satisfies a (mollified) log-Sobolev inequality, which is why we state Guionnet's result as such in Theorem \ref{t.Guionnet}.  Guionnet proved the theorem from the Herbst concentration inequality \cite{GR98}, which shows that Lipschitz functionals of a random variable whose law satisfies a log-Sobolev inequality have sub-Gaussian tails (with dimension-independent bounds determined by the Lipschitz norm of the functional).  Theorem \ref{t.Guionnet} is then proved by combining this with Lipschitz functional calculus, together with the following lemma from matrix theory (see \cite[p.37, Thm. 1, and p.39, Rk. 2]{HW53}).

\begin{lemma}\label{lem:hw}
\textnormal{(Hoffman, Wielandt).} Let $A, B$ be symmetric $N\times N$ matrices with eigenvalues $\lambda^A_1\leq\lambda^A_2\leq\ldots\leq\lambda^A_N$ and $\lambda^B_1\leq\lambda^B_2\leq\ldots\leq\lambda^B_N$. Then
\[
\sum_{j=1}^N (\lambda^A_j-\lambda^B_j)^2\leq\mathrm{Tr}[(A-B)^2].
\]
\end{lemma}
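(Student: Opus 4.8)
The plan is to combine the spectral theorem with a doubly-stochastic-matrix argument (Birkhoff's theorem plus the rearrangement inequality). Write $A = U\Lambda_A U^{T}$ and $B = V\Lambda_B V^{T}$, where $U,V$ are orthogonal and $\Lambda_A = \mathrm{diag}(\lambda^A_1,\dots,\lambda^A_n)$, $\Lambda_B = \mathrm{diag}(\lambda^B_1,\dots,\lambda^B_n)$ carry the eigenvalues in increasing order. Expanding the right-hand side,
$$
\mathrm{Tr}[(A-B)^2] = \sum_{i=1}^n (\lambda^A_i)^2 + \sum_{i=1}^n (\lambda^B_i)^2 - 2\,\mathrm{Tr}[AB],
$$
so the claimed inequality $\sum_{i=1}^n(\lambda^A_i-\lambda^B_i)^2 \leq \mathrm{Tr}[(A-B)^2]$ is equivalent to the single bound $\mathrm{Tr}[AB] \leq \sum_{i=1}^n \lambda^A_i\lambda^B_i$.

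Next I would rewrite $\mathrm{Tr}[AB]$ in terms of the orthogonal matrix $Q \coloneqq U^{T}V$. By cyclicity of the trace,
$$
\mathrm{Tr}[AB] = \mathrm{Tr}\!\left[\Lambda_A\, Q\, \Lambda_B\, Q^{T}\right] = \sum_{i,j=1}^n \lambda^A_i\,\lambda^B_j\, q_{ij}^2,
$$
where $q_{ij}$ are the entries of $Q$. Because $Q$ is orthogonal, the matrix $S = (q_{ij}^2)$ has every row sum and every column sum equal to $1$; that is, $S$ is doubly stochastic. Hence $\mathrm{Tr}[AB] \leq \max_S \sum_{i,j} \lambda^A_i\lambda^B_j\, s_{ij}$, the maximum taken over the compact convex set of $n\times n$ doubly stochastic matrices.

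The key step, and the main obstacle, is to identify this maximum. Since the objective functional is linear in $S$, its maximum is attained at an extreme point of the set of doubly stochastic matrices, and by Birkhoff's theorem these extreme points are exactly the permutation matrices. So it suffices to show $\sum_{i=1}^n \lambda^A_i\lambda^B_{\sigma(i)} \leq \sum_{i=1}^n \lambda^A_i\lambda^B_i$ for every permutation $\sigma$, which is precisely the rearrangement inequality applied to the two increasingly-sorted sequences $(\lambda^A_i)_i$ and $(\lambda^B_i)_i$. Putting these together gives $\mathrm{Tr}[AB] \leq \sum_i \lambda^A_i\lambda^B_i$, hence the lemma. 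If one prefers to avoid invoking Birkhoff's theorem, one can argue directly: any doubly stochastic $S$ that is not a permutation matrix contains a closed alternating cycle of positive entries along which $S$ can be perturbed linearly in either direction without decreasing the objective, and pushing until some entry hits $0$ strictly reduces the support; iterating terminates at a permutation matrix, where the rearrangement inequality applies.
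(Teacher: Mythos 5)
Your argument is correct. Note that the paper does not prove this lemma at all — it is quoted from Hoffman and Wielandt's 1953 paper as a known result — so there is no internal proof to compare against; what you have written is the standard proof of the Hoffman--Wielandt inequality. Each step checks out: the reduction of the claim to $\mathrm{Tr}[AB]\leq\sum_i\lambda^A_i\lambda^B_i$, the identity $\mathrm{Tr}[AB]=\sum_{i,j}\lambda^A_i\lambda^B_j q_{ij}^2$ with $Q=U^TV$, the observation that $(q_{ij}^2)$ is doubly stochastic, the passage to extreme points of the Birkhoff polytope, and the rearrangement inequality (which indeed holds for arbitrary real, not necessarily nonnegative, monotone sequences, so possibly negative eigenvalues cause no trouble). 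Your sketched alternative that avoids Birkhoff's theorem by cycling mass along an alternating cycle is also sound, though strictly optional; either way the lemma is fully established.
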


\subsection{The Proof of Theorem \ref{t.main}} We now proceed to prove Theorem \ref{t.main}, using Theorem \ref{t.LSI}.  Let $X_N$ be the matrix ensemble satisfying conditions (1) and (2) of Theorem \ref{t.main}.  Denote the blocks of given partition as $\Pi_N = \{P_1,\ldots,P_r\}$; .  We also make the initial restricted assumption that the entries of $\sqrt{N}X_N$ are bounded by some uniform constant $R$, $\|\sqrt{N}[X_N]_{ij}\|_\infty \le R$ for all $N$ and all $1\le i,j\le N$; we will remove this assumption at the end of the proof.

Now, let $t=t_N>0$ (to be chosen later), and let $G_N$ be a $\mathrm{GOE}_N$ (with entries of variance $\frac1N$) independent from $X_N$.  Set \begin{equation} \label{e.tildeX1} \widetilde{X}_N = X_N+t G_N. \end{equation}
For $1\le k\le r$, let $\mx{Y}_k$ denote the random vector in $\R^{|P_k|}$ given by the entries $[X_N]_{ij}$ with $(i,j)\in P_k$; similarly, let $\mx{Z}_k$ be the corresponding entries of $G_N$.  Notice that $\sqrt{N}\mx{Y}_k$ is a bounded random vector: by assumption, all of its entries have $L^\infty$-norm $\le R$, and so $\||N|\mx{Y}_k|\|_\infty^2 \le R|P_k|^{1/2} \le Rd_N^{1/2}$.  The vector $\sqrt{N}\mx{Z}_k$ is a standard normal random vector in $\R^{|P_k|}$.  Thus, by Theorem \ref{t.LSI}, the law of $\sqrt{N}(\mx{Y}_k+t\mx{Z}_k)$ satisfies a log-Sobolev inequality with constant
\begin{equation} \label{e.c(t).est.1} c(t) \le 289(Rd_N^{1/2})^2\exp\left(20d_N+\frac{5(Rd_N^{1/2})^2}{t}\right) \le 289R^2\exp\left(21d_N+\frac{5R^2d_N}{t}\right) \end{equation}
(where we have made the blunt estimate $d_N\le \exp d_N$).  By assumption, the random variables $\{\mx{Y}_k\}_{k=1}^r$ are independent, as are $\{\mx{Z}_k\}_{k=1}^r$.  Hence $\{\sqrt{N}(\mx{Y}_k+t\mx{Z}_k)\}_{k=1}^r$ are independent.  Thus, the joint law of entries of $\sqrt{N}\widetilde{X}_N$ is the product measure of the laws of these random variables.  As all their laws satisfy log-Sobolev inequalities with the same constant $c(t)$ in \eqref{e.c(t).est.1}, Segal's Lemma \ref{l.Segal} shows that:
\begin{corollary} \label{c.LSI} The joint law of entries of $\sqrt{N}\widetilde{X}_N$ satisfies a log-Sobolev inequality with constant $c(t)$ of \eqref{e.c(t).est.1}.
\end{corollary}
In particular, Guionnet's Theorem \ref{t.Guionnet} shows that the (Lipschitz) linear statistics of the ensemble $\widetilde{X}_N$ are highly concentrated around their means (for fixed $t$).

Our goal is now to compare the linear statistics of $X_N$ to those of $\widetilde{X}_N$.  As usual, let $\mu_N$ denote the ESD of $X_N$, and let $\widetilde\mu_N$ denote the ESD of $\widetilde{X}_N$.  Then, for each $\e>0$, and each test function $f$, we have the following standard $\e/3$-type estimate.
\begin{equation}\label{eqn:epsover3}
\begin{aligned}
\mathbb{P}\left(\left|\int f\,d\mu_N-\mathbb{E}\left(\int f\,d\mu_N\right)\right|\geq \epsilon\right)
&\leq\mathbb{P}\left(\left|\int f\,d\mu_N-\int f\, d\widetilde\mu_N\right|\geq \frac{\epsilon}{3}\right)\\
&+\mathbb{P}\left(\left|\int f\,d\widetilde\mu_N-\mathbb{E}\left(\int f\,d\widetilde\mu_N\right)\right|\geq \frac{\epsilon}{3}\right)\\
&+\mathbb{P}\left(\left|\mathbb{E}\left(\int f\, d\widetilde\mu_N\right)-\mathbb{E}\left(\int f\, d\mu_N\right)\right|\geq \frac{\epsilon}{3}\right).
\end{aligned}
\end{equation}
We will now show that, with a judicious choice of $t=tN$, each of the three terms in \eqref{eqn:epsover3} converges to $0$ as $N\to\infty$,
which proves the desired convergence in probability of $\mu_N$ (under the boundedness assumption).  We proceed to do this in the following three lemmas.  (Let us note that the arguments here are very similar to those in the second author's paper \cite[Lemmas 11-13]{Zi13}, and, in turn, also similar to the methodology in \cite{Gu09}.)

\begin{lemma}\label{lem:epsover31} Let $f\in\mathrm{Lip}(\R)$, and let $\e>0$.  Then for all $N\in\N$,
\[
\mathbb{P}\left(\left|\int f \,d\mu_N-\int f\,d\widetilde{\mu}\right|\geq \frac{\epsilon}{3}\right)
\leq\frac{9 \|f\|_{\mathrm{Lip}}^2}{\epsilon^2}\,t.
\]
\end{lemma}

\begin{proof}
Let $\lambda^N_1\leq\lambda^N_2\leq\ldots\leq\lambda^N_N$ and $\widetilde{\lambda}^N_1\leq\widetilde{\lambda}^N_2\leq\ldots\leq\widetilde{\lambda}^N_N$ be the eigenvalues of $X_N$ and $\widetilde{X}_N$. Then by the Cauchy-Schwarz inequality and Lemma \ref{lem:hw},
\begin{align*}
\left|\int f \, d\mu_N-\int f \, d\widetilde\mu\right|
\leq \frac{1}{N}\sum_{i=1}^N \|f\|_{\mathrm{Lip}}\left|\lambda^N_i-\widetilde{\lambda}^N_i\right|
&\leq \frac{\|f\|_{\mathrm{Lip}}}{\sqrt{N}}\left(\sum_{i=1}^N (\lambda^N_i-\widetilde{\lambda}^N_i)^2\right)^{1/2}\\
&\leq \frac{\|f\|_{\mathrm{Lip}}}{\sqrt{N}}\left(\mathrm{Tr}[(X_N-\widetilde{X}_N)^2]\right)^{1/2}.
\end{align*}
Thus
\begin{align*}
\mathbb{P}\left(\left|\int f\,d\mu_N-\int f\, d\widetilde\mu_N\right|\geq \frac{\epsilon}{3}\right)
&\leq \mathbb{P}\left(\frac{\|f\|_{\mathrm{Lip}}}{\sqrt{N}}\left(\mathrm{Tr}[(X_N-\widetilde{X}_N)^2]\right)^{1/2}\geq \frac{\epsilon}{3}\right)\\
&=\mathbb{P}\left(\mathrm{Tr}[(X_N-\widetilde{X}_N)^2]\geq \frac{\epsilon^2 N}{9 \|f\|_{\mathrm{Lip}}^2}\right).
\end{align*}
By Markov's inequality, this is bounded above by
\begin{align*}
\frac{9 \|f\|_{\mathrm{Lip}}^2}{\epsilon^2 N}\mathbb{E}\left(\mathrm{Tr}[(X_N-\widetilde{X}_N)^2]\right)
=\frac{9 \|f\|_{\mathrm{Lip}}^2}{\epsilon^2 N}\sum_{1\leq i,j \leq N}\mathbb{E}\left(([X_N]_{ij}-[\widetilde{X}_N]_{ij})^2\right)
=\frac{9 \|f\|_{\mathrm{Lip}}^2}{\epsilon^2}\,t
\end{align*}
concluding the proof.  \end{proof}

\begin{lemma}\label{lem:epsover32}  Let $f\in \mathrm{Lip}(\R)$, and let $\e>0$.  Let $c(t)$ denote the log-Sobolev constant in \eqref{e.c(t).est.1}.  Then for all $N$,
\[
\mathbb{P}\left(\left|\int f \, d\widetilde\mu_N-\mathbb{E}\left(\int f\, d\widetilde\mu_N\right)\right|\geq \frac{\epsilon}{3}\right)
\leq 2\exp\left(\frac{-N^2 \epsilon^2}{36 c(t)\|f\|_{\mathrm{Lip}}^2}\right).
\]
\end{lemma}

\begin{proof}
This is immediate from Theorem \ref{t.Guionnet} and Corollary \ref{c.LSI}.
\end{proof}

The final term in \eqref{eqn:epsover3} is the probability of a deterministic event, so it is either $0$ or $1$.  By letting $t=t_N$ shrink to $0$, the probability will be $0$ eventually.

\begin{lemma}\label{lem:epsover33}
Let $f\in\mathrm{Lip}(\R)$, and let $\e>0$.  If $t_N\to0$ as $N\rightarrow\infty$, then
\[
\mathbb{P}\left(\left|\mathbb{E}\left(\int f\,d\widetilde\mu_N\right)-\mathbb{E}\left(\int f\,d\mu_N\right)\right|\geq \frac{\epsilon}{3}\right) = 0
\]
for all sufficiently large $N$.
\end{lemma}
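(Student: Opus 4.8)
The plan is to exploit the fact that the two quantities inside the absolute value are \emph{deterministic}: after taking expectations nothing random remains, so $\mathbb{E}\big(\int f\, d\mu_{\widetilde X_n}\big)$ and $\mathbb{E}\big(\int f\, d\mu_{X_n}\big)$ are real numbers, and the probability in question is therefore either $0$ or $1$. Hence it suffices to prove the deterministic bound
$$
\Big|\mathbb{E}\Big(\int f\, d\mu_{\widetilde X_n}\Big)-\mathbb{E}\Big(\int f\, d\mu_{X_n}\Big)\Big|<\frac{\epsilon}{3}
$$
for all sufficiently large $n$, from which the event inside the probability is empty and the probability is $0$.

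To obtain such a bound I would recycle the pathwise estimate already established inside the proof of Lemma~\ref{lem:epsover31}: by Cauchy--Schwarz and the Hoffman--Wielandt inequality (Lemma~\ref{lem:hw}),
$$
\Big|\int f\, d\mu_{X_n}-\int f\, d\mu_{\widetilde X_n}\Big|\le\frac{\|f\|_{\mathrm{Lip}}}{\sqrt n}\big(\mathrm{Tr}[(X_n-\widetilde X_n)^2]\big)^{1/2}.
$$
Taking expectations, using the triangle inequality on the left and Jensen's inequality (concavity of the square root) on the right, gives
$$
\Big|\mathbb{E}\Big(\int f\, d\mu_{X_n}\Big)-\mathbb{E}\Big(\int f\, d\mu_{\widetilde X_n}\Big)\Big|\le\frac{\|f\|_{\mathrm{Lip}}}{\sqrt n}\big(\mathbb{E}\,\mathrm{Tr}[(X_n-\widetilde X_n)^2]\big)^{1/2}.
$$
Since $X_n-\widetilde X_n=-\sqrt{\delta/n}\,G_n$ and $\mathbb{E}[G_n]_{ij}^2=1$, the same computation as in Lemma~\ref{lem:epsover31} shows $\mathbb{E}\,\mathrm{Tr}[(X_n-\widetilde X_n)^2]=n\delta$, so the right-hand side equals $\|f\|_{\mathrm{Lip}}\sqrt\delta$.

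Finally, because $\delta=\delta(n)\to0$ by hypothesis, we have $\|f\|_{\mathrm{Lip}}\sqrt\delta<\epsilon/3$ for all $n$ large enough (the case of constant $f$ being trivial, as then the difference of means is exactly $0$), which completes the argument. I do not anticipate any genuine obstacle here: the only point worth flagging is the initial observation that the probability concerns a deterministic difference of means, after which the proof is just the expectation-side version of the estimate already carried out in Lemma~\ref{lem:epsover31}, with a single extra application of Jensen's inequality to move the expectation inside the square root.
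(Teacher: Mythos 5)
Your proposal is correct and follows essentially the same route as the paper: observe that the difference of expectations is deterministic, reuse the Cauchy--Schwarz/Hoffman--Wielandt pathwise bound from Lemma \ref{lem:epsover31}, and move the expectation inside the square root (your Jensen step is the paper's H\"older-with-constant-$1$ step) to get the bound $\|f\|_{\mathrm{Lip}}\sqrt{\delta(n)}\rightarrow 0$. No substantive differences.
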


\begin{proof} It suffices to show that
$\left|\mathbb{E}\left(\int f\, d\widetilde\mu_N\right)-\mathbb{E}\left(\int f \,d\mu_N\right)\right|$
converges to 0 as $N\rightarrow\infty$. Doing similar estimates as in Lemma \ref{lem:epsover31}, we get
\begin{align*}
\left|\mathbb{E}\left(\int f\, d\widetilde\mu_N\right)-\mathbb{E}\left(\int f\,d\mu_N\right)\right|
\leq\mathbb{E}\left(\left|\int f\,d\widetilde\mu_N-\int f\,d\mu_N\right|\right)
&\leq \mathbb{E}\left(\frac{\|f\|_{\mathrm{Lip}}}{\sqrt{N}}\left(\mathrm{Tr}[(X_N-\widetilde{X}_N)^2]\right)^{1/2}\right)\\
&\leq \frac{\|f\|_{\mathrm{Lip}}}{\sqrt{N}}\left(\mathbb{E}\left(\mathrm{Tr}[(X_N-\widetilde{X}_N)^2]\right)\right)^{1/2}\\
&=\|f\|_{\mathrm{Lip}}t_N^{1/2}.
\end{align*}
(The last inequality following from H\"{o}lder's inequality applied to $\left(\mathrm{Tr}[(X_N-\widetilde{X}_N)^2]\right)^{1/2}$
and the constant function $1$.) The result follows.  \end{proof}

We can now prove the theorem under the boundedness assumption.

\begin{proof}[Proof of Theorem \ref{t.main} Assuming $\sqrt{N}X_N$ has uniformly bounded entries]  For $N$ sufficiently large, we define
\[
t_N:=\frac{5R^2d_N}{\log \frac{N}{289R^2}-21d_N}.
\]
By Assumption (2) of Theorem \ref{t.main}, $d_N = o(\log N)$, and hence $t_N\to 0$ as $N\to\infty$.  Note, from \eqref{e.c(t).est.1}, that
\[ c(t_N) \le 289R^2\exp\left(21d_N+\frac{5R^2d_N}{t_N}\right) \le N. \]
Applying Lemmas \ref{lem:epsover31}, \ref{lem:epsover32}, and \ref{lem:epsover33} to (\ref{eqn:epsover3}), we get that for sufficiently large $N$,
\begin{align*}
\mathbb{P}\left(\left|\int f \,d\mu_N-\mathbb{E}\left(\int f \, d\mu_N\right)\right|\geq \epsilon\right)
&\leq \frac{9 \|f\|_{\mathrm{Lip}}^2}{\epsilon^2}\,t_N + 2\exp\left(\frac{-N^2 \epsilon^2}{36c(t_N)\|f\|_{\mathrm{Lip}}^2}\right) + 0\\
&\leq \frac{9 \|f\|_{\mathrm{Lip}}^2}{\epsilon^2}\,t_N+ 2\exp\left(\frac{-N \epsilon^2}{36\|f\|_{\mathrm{Lip}}^2}\right),
\end{align*}
and this tends to $0$ as $N\to\infty$.  Hence, we get convergence in probability with Lipschitz test functions;  it is straightforward to upgrade this to convergence in probability with respect to all $C_b(\R)$ test functions, concluding the proof.  \end{proof}

To conclude the proof, it remains only to remove the boundedness assumption on the entries of $\sqrt{N}X_N$.  This is where the uniform integrability comes in, via a standard cutoff argument that we briefly outline.  Let $\epsilon,\eta>0$.  Let $f\in\mathrm{Lip}(\R)$. By uniform integrability, there exists some $R\geq0$ such that
\[
\mathbb{E}\left(N[X_N]_{ij}^2\cdot\mathds{1}_{\{\sqrt{N}|[X_N]_{ij}|>R\}}\right)<\min(1,\eta)\cdot\epsilon^2/(9||f||_{\mathrm{Lip}}^2)
\]
for all $i,j,N$. Let $\widehat{X}_N$ be the matrix whose entries are the appropriate cutoffs of $X_N$:
\[
[\widehat{X}_N]_{ij}=[X_N]_{ij}\cdot\mathds{1}_{\{\sqrt{N}|[X_N]_{ij}|\le R\}}.
\]
Then $\|\sqrt{N}\widehat{X}_{ij}\|_\infty \le R$ for all $N,i,j$.  Let $\widehat{\mu}_N$ denote the ESD of $\widehat{X}_N$.  The preceding proof shows that $\int f\,d\widehat{\mu}_N$ converge to its mean in probability.  We now compare the linear statistics of $\mu_N$ and $\widehat{\mu}_N$.  This is similar to the preceding analysis.  We make the standard $\e/3$-decomposition:
\begin{equation}\label{eq:otherepsover3}
\begin{aligned}
\mathbb{P}\left(\left|\int f \,d\mu_N-\mathbb{E}\left(\int f\,d\mu_N\right)\right|\geq \epsilon\right)
&\leq\mathbb{P}\left(\left|\int f\,d\mu_N-\int f\,d\widehat{\mu}_N\right|\geq \frac{\epsilon}{3}\right)\\
&+\mathbb{P}\left(\left|\int f\, d\widehat{\mu}_N-\mathbb{E}\left(\int f\, d\widehat{\mu}_N\right)\right|\geq \frac{\epsilon}{3}\right)\\
&+\mathbb{P}\left(\left|\mathbb{E}\left(\int f \, d\widehat{\mu}_N\right)-\mathbb{E}\left(\int f \, d\mu_N\right)\right|\geq \frac{\epsilon}{3}\right).
\end{aligned}
\end{equation}

The above proof in the uniform bounded case shows that the second term in \eqref{eq:otherepsover3} converges to $0$ as $N\to\infty$.  The first term on the right hand side of (\ref{eq:otherepsover3}) is bounded using the same reasoning as done in the proof of Lemma \ref{lem:epsover31}:
\begin{align*}
\mathbb{P}\left(\left|\int f \, d\mu_N-\int f \, d\widehat{\mu}_N\right|\geq \frac{\epsilon}{3}\right)
&\leq\frac{9 \|f\|_{\mathrm{Lip}}^2}{\epsilon^2 N}\sum_{1\leq i,j \leq N}\mathbb{E}\left(([X_N]_{ij}-[\widehat{X}_N]_{ij})^2\right)\\
&=\frac{9 \|f\|_{\mathrm{Lip}}^2}{\epsilon^2 N}\sum_ {1\leq i,j \leq N}\mathbb{E}\left([X_N]_{ij}^2\cdot\mathds{1}_{\{\sqrt{N}|[X_N]_{ij}|>R\}}\right) <\eta.
\end{align*}

Finally, the third term is bounded as in Lemma \ref{lem:epsover33}:
\begin{align*}
\left|\mathbb{E}\left(\int f\,d\widehat{\mu}_N\right)-\mathbb{E}\left(\int f\,d\mu_N\right)\right|
&\leq \frac{\|f\|_{\mathrm{Lip}}}{\sqrt{N}}\left(\mathbb{E}\left(\mathrm{Tr}[(X_N-\widehat{X}_N)^2]\right)\right)^{1/2}\\
&= \frac{\|f\|_{\mathrm{Lip}}}{\sqrt{N}}\left(\sum_{1\leq i,j \leq N}\mathbb{E}\left([X_n]_{ij}^2\cdot\mathds{1}_{\{\sqrt{N}|[X_N]_{ij}|> R\}}\right)\right)^{1/2}
<\frac{\epsilon}{3},
\end{align*}
so $\mathbb{P}\left(\left|\mathbb{E}\left(\int f\,d\widehat{\mu}_N\right)-\mathbb{E}\left(\int f\, d\mu_N\right)\right|\geq \frac{\epsilon}{3}\right)=0$.  Therefore  
\[
\limsup_{N\rightarrow\infty}\,\mathbb{P}\left(\left|\int f\,d\mu_N-\mathbb{E}\left(\int f\,d\mu_N\right)\right|\geq \epsilon\right)\leq\eta.
\]
Since $\eta>0$ was arbitrary, we have $\mathbb{P}\left(\left|\int f\, d\mu_N-\mathbb{E}\left(\int f \, d\mu_N\right)\right|\geq \epsilon\right)\rightarrow0$ as $N\rightarrow\infty$, giving convergence in probability.

\section{Mollified Log-Sobolev Inequalities on $\R^d$\label{section.LSI}}

In this section we will prove Theorem \ref{t.LSI}.  For convenience, we restate it below as Theorem \ref{thm:lsiboundRn}, in measure theoretic language.

\begin{theorem}\label{thm:lsiboundRn}
Let $\mu$ be a probability measure on $\mathbb{R}^d$ whose support is contained in a ball of radius $R$, and let $\gamma_t$ be the centered Gaussian of variance $t$ with $0<t\leq R^2$, i.e., $\gamma_t(x)=(2\pi t)^{-d/2}\exp(-\frac{|x|^2}{2t})\,dx$. Then for some absolute constant $K$, the optimal log-Sobolev constant $c(t)$ for the convolution $\mu\ast\gamma_t$ satisfies
\[
c(t)\leq K\, R^2\exp\left(20d+\frac{5R^2}{t}\right).
\]
$K$ can be taken above to be $289$. 
\end{theorem}

\begin{remark} Theorem \ref{thm:lsiboundRn} is slightly more general than Theorem \ref{t.LSI}, since it only requires the support to be contained in some ball of radius $R$; by contrast, in Theorem \ref{t.LSI}, $R$ is the radius of a ball {\em centered at $0$} containing $\mathrm{supp}\,\mu$.  If we use the theorem in this form, we could actually improve Theorem \ref{t.main} by softening the requirement that the entires be uniformly square integrable, only requiring their centered versions $\sqrt{N}([X_N]_{ij}-\E([X_N]_{ij}))$ to be uniformly square integrable.  However, since any ensembles we wish to apply Theorem \ref{t.main} to must converge in expectation, this does not given any practical improvement.
\end{remark}

\subsection{The Proof of Theorem \ref{thm:lsiboundRn}}

To prove Theorem \ref{thm:lsiboundRn}, we use the following theorem (see \cite[p.288, Thm. 1.2]{CGW10}):

\begin{theorem}\label{thm:CGW}
\textnormal{(Cattiaux, Guillin, Wu).} Let $\mu$ be a probability measure on $\mathbb{R}^d$ with $d\mu(x)=e^{-V(x)}dx$ for some $V\in C^2(\mathbb{R}^d)$. Suppose the following:
\begin{enumerate}
\item\label{assum:hess}
There exists a constant $K\leq 0$ such that $\mathrm{Hess}(V)\geq K I$.

\item\label{assum:lyapunov}
There exists a $W\in C^2(\mathbb{R}^d)$ with $W\geq 1$ and constants $b,c>0$ such that
$$
t W(x)-\langle\nabla V,\nabla W\rangle(x)\leq(b-c|x|^2)W(x)
$$
for all $x\in\mathbb{R}^d$.
\end{enumerate}
Then $\mu$ satisfies a LSI.

In particular, let $r_0,b',\lambda>0$ be such that
$$
t W(x)-\langle\nabla V,\nabla W\rangle(x)\leq-\lambda W(x)+b'\mathds{1}_{B_{r_0}}
$$
where $B_{r_0}$ denotes the ball centered at $0$ of radius $r_0$ (the existence of such $r_0,b',\lambda$ is implied by Assumption \ref{assum:lyapunov}). By \cite[p.61, Thm. 1.4]{BBCG08}, $\mu$ satisfies a Poincar\'e inequality with constant $C_P$; that is, for every sufficiently smooth $g$ with $\int g\mbox{ }d\mu=0$,
$$
\int g^2d\mu\leq C_P\int|\nabla g|^2d\mu;
$$
$C_P$ can be taken to be $(1+b'\kappa_{r_0})/\lambda$, where $\kappa_{r_0}$ is the Poincar\'e constant of $\mu$ restricted to $B_{r_0}$. A bound for $\kappa_{r_0}$ is 
$$
\kappa_{r_0}\leq Dr_0^2\,\frac{\sup_{x\in B_{r_0}}p(x)}{\inf_{x\in B_{r_0}}p(x)},
$$
where $p(x)=e^{-V(x)}$ and $D$ is some absolute constant that can be taken to be $4/\pi^2$. Let
\begin{align*}
A=&\frac{2}{c}\left(\frac{1}{\epsilon}-\frac{K}{2}\right)+\epsilon\\
B=&\frac{2}{c}\left(\frac{1}{\epsilon}-\frac{K}{2}\right)\left(b+c\int|x|^2d\mu(x)\right),
\end{align*}
where $\epsilon$ is an arbitrarily chosen parameter. Then $\mu$ satisfies a LSI with constant $A+(B+2)C_P$. 
\end{theorem} 

We remark that the statement of Theorem \ref{thm:CGW} is given in \cite{CGW10} in the more general context of Riemannian manifolds. Also, the constants given above are derived in \cite{CGW10} but not presented there; for our purposes we have collected those constants and presented them here.

With the above, we now prove Theorem \ref{thm:lsiboundRn}, which we restate here for the reader's convenience. 

\begin{theorem}
Let $\mu$ be a probability measure on $\mathbb{R}^d$ whose support is contained in a ball of radius $R$, and let $\gamma_t$ be the centered Gaussian of variance $t$ with $0<t\leq R^2$, i.e., $d\gamma_t(x)=(2\pi t)^{-n/2}\exp(-\frac{|x|^2}{2t})dx$. Then for some absolute constant $K$, the optimal log-Sobolev constant $c(t)$ for $\mu*\gamma_t$ satisfies
$$
c(t)\leq K\, R^2\exp\left(20n+\frac{5R^2}{t}\right).
$$
$K$ can be taken above to be $289$. 
\end{theorem}

\begin{proof}
By translation invariance of LSI, we will assume that $\mu$ is supported in $B_R$. We will apply Theorem \ref{thm:CGW} to $\mu_t$ and compute the appropriate bounds and expressions for $K$, $W$, $b$, $c$, $r_0$, $b'$, $\lambda$, $\kappa_{r_0}$, $C_P$, $\int|x|^2d\mu_{t}(x)$, $A$, and $B$.  

To find $K, b$, and $c$, we follow the computations as done in \cite[pp. 7-8]{WW13}. Let $V(x)=\frac{x^2}{2t}$ and $V_t(x)=-\log(p_t(x))$, so $$
d\mu_t(x)=e^{-V_t(x)}dx=d(e^{-V}*\mu)(x).
$$
Also let 
$$
d\mu_x(z)=\frac{1}{p_t(x)}e^{-V(x-z)}d\mu(z),
$$
so $\mu_x$ is a probability measure for each $x\in\mathbb{R}^d$. Then for $X\in\mathbb{R}^d$ with $|X|=1$,

\begin{align*}
\mathrm{Hess}(V_t)(X,X)(x)=&\left(\int_{B_R}\nabla_X V(x-z)d\mu_x(z)\right)^2-\int_{B_R}\left(|\nabla_XV(x-z)|^2-\mathrm{Hess}(V)(X,X)(x-z)\right)d\mu_x(z)\\
=&\frac{1}{t}-\left(\int_{B_R}|\nabla_XV(x-z)|^2d\mu_x(z)-\left(\int_{B_R}\nabla_X V(x-z)d\mu_x(z)\right)^2\right)\\
&\mbox{since }\mathrm{Hess}(V)=\frac{1}{t}I.
\end{align*}
But for any $C^1$ function $f$,
\begin{align*}
\int_{B_R}f^2d\mu_x(z)-\left(\int_{B_R}f\mbox{ }d\mu_x(z)\right)^2=&\frac{1}{2}\int_{B_R\times B_R}(f(z)-f(y))^2d\mu_x(z)d\mu_x(y)\\
\leq&2R^2\sup|\nabla f|^2,
\end{align*}
so for $f=\nabla_XV$, we get
\begin{align*}
\mathrm{Hess}(V_t)(X,X)(x)\geq\frac{1}{t}-2R^2\sup|\nabla(\nabla_XV)|^2 =\frac{1}{t}-\frac{2R^2}{t^2}.
\end{align*}
So we take
$$
K=\frac{1}{t}-\frac{2R^2}{t^2}.
$$
Note $K\leq 0$ since $t\leq R^2$.

Let 
$$
W(x)=\exp\left(\frac{|x|^2}{16t}\right).
$$
Then
\begin{align*}
\frac{t W-\langle\nabla V_t,\nabla W\rangle}{W}(x)=&\frac{n}{8t}+\frac{|x|^2}{64t^2}-\frac{1}{16t}\int_{B_R}\langle x,\nabla V(x-z)\rangle d\mu_x(z)\\
=&\frac{n}{8t}+\frac{|x|^2}{64t^2}-\frac{1}{16t^2}\int_{B_R}\left(|x|^2-\langle x,z\rangle\right) d\mu_x(z)\\
\leq&\frac{n}{8t}-\frac{3|x|^2}{64t^2}+\frac{1}{16t^2}\sup_{z\in B_R}\langle x,z\rangle\\
=&\frac{n}{8t}-\frac{3|x|^2}{64t^2}+\frac{1}{16t^2}R|x|.
\end{align*}
Using $|x|\leq|x|^2/2R+R/2$ above, we get
\begin{align*}
\frac{t W-\langle\nabla V_t,\nabla W\rangle}{W}(x)\leq\frac{n}{8t}-\frac{3|x|^2}{64t^2}+\frac{1}{16t^2}R\left(\frac{|x|^2}{2R}+\frac{R}{2}\right)
=\frac{n}{8t}+\frac{R^2}{32t^2}-\frac{1}{64t^2}|x|^2,
\end{align*}
so we take
\begin{align*}
b=&\frac{n}{8t}+\frac{R^2}{32t^2},\\
c=&\frac{1}{64t^2}.
\end{align*}

Now let
\begin{align*}
r_0=&\sqrt{16nt+2R^2},\\
b'=&\frac{1}{4t}\exp\left(n+\frac{R^2}{8t}-1\right),\\
\lambda=&\frac{n}{8t}.
\end{align*}
We claim that
$$
b-c|x|^2\leq-\lambda+b'\exp\left(-\frac{|x|^2}{16t}\right)\mathds{1}_{B_{r_0}},\hspace{7mm}\mbox{i.e.,}\hspace{7mm}\frac{b+\lambda-c|x|^2}{b'}\exp\left(\frac{|x|^2}{16t}\right)\leq\mathds{1}_{B_{r_0}},
$$
so that
$$
t W(x)-\langle\nabla V,\nabla W\rangle(x)\leq-\lambda W(x)+b'\mathds{1}_{B_{r_0}}.
$$
We have
\begin{align*}
\frac{b+\lambda-c|x|^2}{b'}\exp\left(\frac{|x|^2}{16t}\right)=&4t\exp\left(-n-\frac{R^2}{8t}+1\right)\left(\frac{n}{8t}+\frac{R^2}{32t^2}+\frac{n}{8t}-\frac{|x|^2}{64t^2}\right)\exp\left(\frac{|x|^2}{16t}\right)\\
=&\left(n+\frac{R^2}{8t}-\frac{|x|^2}{16t}\right)\exp\left(-\left(n+\frac{R^2}{8t}-\frac{|x|^2}{16t}\right)+1\right).
\end{align*}
For $|x|\geq r_0$, the above expression is nonpositive, and for $|x|\leq r_0$, the above expression is of the form $u e^{-u+1}$, which has a maximum value of 1, as desired.

Now we estimate $\kappa_{r_0}$ by estimating $\sup_{x\in B_{r_0}}p_t(x)$ and $\inf_{x\in B_{r_0}}p_t(x)$. For $x\in B_{r_0}$, we have
\begin{align*}
p_t(x)=\int_{B_R}(2\pi t)^{-n/2}\exp\left(-\frac{|x-y|^2}{2t}\right)d\mu(y)
\leq\int_{B_R}(2\pi t)^{-n/2}d\mu(y)
=(2\pi t)^{-n/2}
\end{align*}
and
\begin{align*}
p_t(x)=\int_{B_R}(2\pi t)^{-n/2}\exp\left(-\frac{|x-y|^2}{2t}\right)d\mu(y)
\geq&\int_{B_R}(2\pi t)^{-n/2}\exp\left(-\frac{(r_0+R)^2}{2t}\right)d\mu(y)\\
=&(2\pi t)^{-n/2}\exp\left(-\frac{(r_0+R)^2}{2t}\right),
\end{align*}
so
\begin{align*}
\kappa_{r_0}\leq Dr_0^2\,\frac{\sup_{x\in B_{r_0}}p(x)}{\inf_{x\in B_{r_0}}p(x)}
\leq Dr_0^2\exp\left(\frac{(r_0+R)^2}{2t}\right).
\end{align*}

We then take
\begin{align*}
C_P=&\frac{1+b'\kappa_{r_0}}{\lambda}\\
\leq&\frac{8t}{n}\left(1+\frac{1}{4t}\exp\left(n+\frac{R^2}{8t}-1\right)\cdot Dr_0^2\exp\left(\frac{(r_0+R)^2}{2t}\right)\right)\\
=&\frac{8t}{n}+\frac{D}{e}\left(32t+\frac{4R^2}{n}\right)\exp\left(n+\frac{R^2}{8t}+\frac{(\sqrt{16nt+2R^2}+R)^2}{2t}\right).
\end{align*}
Using $\sqrt{a}+\sqrt{b}\leq\sqrt{2(a+b)}$ and the assumptions $t\leq R^2$ and $n\geq 1$ above, we get
\begin{align*}
C_P\leq&\frac{8R^2}{1}+\frac{D}{e}\left(32R^2+\frac{4R^2}{1}\right)\exp\left(n+\frac{R^2}{8t}+\frac{\sqrt{2(16nt+2R^2+R^2)}^2}{2t}\right)\\
=&8R^2+\frac{36D}{e}R^2\exp\left(17n+\frac{25R^2}{8t}\right)\\
\leq&\left(8+\frac{36D}{e}\right)R^2\exp\left(17n+\frac{25R^2}{8t}\right).
\end{align*}

Next, we estimate $\int|x|^2d\mu_{t}(x)$:
\begin{align*}
\int_{\mathbb{R}^d}|x|^2d\mu_{t}(x)=&\int_{\mathbb{R}^d}\int_{B_R}|x|^2(2\pi t)^{-n/2}\exp\left(-\frac{|x-y|^2}{2t}\right)d\mu(y)dx\\
=&(2\pi t)^{-n/2}\int_{B_R}\int_{\mathbb{R}^d}|x+y|^2\exp\left(-\frac{|x|^2}{2t}\right)dx\,d\mu(y)\\
&\mbox{by replacing }x\rightarrow x+y\\
=&(2\pi t)^{-n/2}\int_{B_R}\int_{\mathbb{R}^d}(|x|^2+|y|^2)\exp\left(-\frac{|x|^2}{2t}\right)dx\,d\mu(y)\\
&+(2\pi t)^{-n/2}\int_{B_R}\int_{\mathbb{R}^d}2\langle x,y\rangle \exp\left(-\frac{|x|^2}{2t}\right)dx\,d\mu(y).
\end{align*}
The second integral in the last expression above equals $0$ since the integrand is an odd function of $x$. So
\begin{align*}
\int_{\mathbb{R}^d}|x|^2d\mu_{t}(x)=&(2\pi t)^{-n/2}\int_{B_R}\int_{\mathbb{R}^d}(|x|^2+|y|^2)\exp\left(-\frac{|x|^2}{2t}\right)dx\,d\mu(y)\\
\leq&(2\pi t)^{-n/2}\int_{\mathbb{R}^d}\int_{B_R}(|x|^2+R^2)\exp\left(-\frac{|x|^2}{2t}\right)d\mu(y)dx\\
=&(2\pi t)^{-n/2}\int_{\mathbb{R}^d}(|x|^2+R^2)\exp\left(-\frac{|x|^2}{2t}\right)dx\\
=&nt+R^2,
\end{align*}
the last integral computed using polar coordinates.

To get expressions for $A,B$, we choose $\epsilon=16t$; then $A,B$ satisfy
\begin{align*}
A=\frac{2}{c}\left(\frac{1}{\epsilon}-\frac{K}{2}\right)+\epsilon
=128t^2\left(\frac{1}{16t}-\left(\frac{1}{2t}-\frac{R^2}{t^2}\right)\right)+16t
=128R^2-40t
\leq128R^2
\end{align*}
and
\begin{align*}
B=\frac{2}{c}\left(\frac{1}{\epsilon}-\frac{K}{2}\right)\left(b+c\int|x|^2d\mu_t(x)\right)
\leq&128t^2\left(\frac{1}{16t}-\left(\frac{1}{2t}-\frac{R^2}{t^2}\right)\right)\left(\frac{n}{8t}+\frac{R^2}{32t^2}+\frac{1}{64t^2}\left(nt+R^2\right)\right)\\
=&\frac{18nR^2}{t}+\frac{6R^4}{t^2}-\frac{63n}{8}-\frac{21R^2}{8}\\
\leq&\frac{18nR^2}{t}+\frac{6R^4}{t^2}-2.
\end{align*}

Putting everything together, we get that the optimal log-Sobolev constant $c(t)$ for $\mu_t$ satisfies
\begin{align*}
c(t)\leq&A+(B+2)C_P\\
\leq&128R^2+\left(\frac{18nR^2}{t}+\frac{6R^4}{t^2}-2+2\right)\left(8+\frac{36D}{e}\right)R^2\exp\left(17n+\frac{25R^2}{8t}\right)\\
=&128R^2+12\cdot\frac{R^2}{2t}\left(3n+\frac{R^2}{t}\right)\left(8+\frac{36D}{e}\right)R^2\exp\left(17n+\frac{25R^2}{8t}\right).
\end{align*}
Applying $u\leq e^u$ to two of the terms in the expression above, we get
\begin{align*}
c(t)\leq&128R^2+12\exp\left(\frac{R^2}{2t}\right)\exp\left(3n+\frac{R^2}{t}\right)\left(8+\frac{36D}{e}\right)R^2\exp\left(17n+\frac{25R^2}{8t}\right)\\
=&128R^2+\left(96+\frac{432D}{e}\right)R^2\exp\left(20n+\frac{37R^2}{8t}\right)\\
\leq&\left(128+96+\frac{432D}{e}\right)R^2\exp\left(20n+\frac{5R^2}{t}\right)\\
\leq&289R^2\exp\left(20n+\frac{5R^2}{t}\right).
\end{align*}

This concludes the proof of Theorem \ref{thm:lsiboundRn}.
\end{proof}


\end{document}